\newtheorem{teor}{Theorem}
\newtheorem*{ter}{Theorem}
\newtheorem*{conj}{Conjecture}
\newtheorem{opr}{Definition}
\newtheorem{lem}{Lemma}
\newtheorem{prop}{Proposition}
\renewcommand\phi{\varphi}
\renewcommand\leq{\leqslant}
\renewcommand\geq{\geqslant}
\newcommand{\eps}{\varepsilon}
\newcommand{\ppmod}{\,{\rm{mod}}\,}
\begin{document}
\title{On the interpolation of integer-valued polynomials}
\author{V. V. Volkov, F. V. Petrov}
\maketitle
\begin{abstract}
It is well known, that if polynomial with rational 
coefficients of degree $n$ takes integer values 
in points $0,1,\dots,n$ then it takes integer values 
in all integer points. Are there sets of $n+1$ points with 
the same property in other integral domains? We show that 
answer is negative for the ring of Gaussian 
integers $\mathbb{Z}[i]$ when $n$ is large enough. 
Also we discuss the question about minimal possible size 
of set, such that if polynomial takes integer values in 
all points of this set then it is integer-valued.
\end{abstract}

\section{Introduction}

Let $R$ be integral domain and $f(x)$ be polynomial with 
coefficients from the quotient field of $R$ (or any larger field).
The polynomial $f$ is called \emph{integer-valued} 
if $f(R)\subset R$. There is much 
literature about integer-valued polynomials 
(see book \cite{kniga} and links therein). Sometimes 
polynomials are integer-valued when some weaker 
conditions holds, for example: if $R=\mathbb{Z}$ then 
polynomial of degree not higher than $n$ is integer-valued iff 
it takes integer values in points $0,1,\dots,n$. This fact 
is well-known and can be proved in many ways (one of them is 
direct interpolation of $f$ in above-metioned
points). Let us call 
set $M \subset R$ \emph{$n$-universal} if any 
polynomial $f$ of degree not higher than $n$ such 
that $f(M) \subset R$ is integer-valued. The natural question 
is: what is the minimal cardinality of $n$-universal 
set? Obviously, it is not 
less than $n+1$ for any infinite 
domain $R$. So there is special interest in 
studying situations when it is 
exactly $n+1$. Sequence $\{a_0,a_1,\dots\}$ with elements 
in $R$ is called simultaneous P-ordering 
if $\{a_0,a_1,\dots,a_n\}$ is an $n$-universal set for 
any $n$. Melanie Wood \cite{Wood} showed that there is no 
simultaneous P-ordering in the rings of integers of 
imaginary quadratic fields (in particular in 
the ring $\mathbb{Z}[i]$ of Gaussian integers).

From now on we consider domain 
$\mathbb{Z}[i]$. Let us call its elements integer 
numbers and irreducible elements --- prime numbers. 
Let $|n|$ be the absolute value of the complex 
number $n$, and $\|n\| = |n|^2$ --- its norm. 
For any non-zero $n\in \mathbb{Z}[i]$ there are 
exactly $\|n\|$ residues (remainders) 
modulo $n$. Let $|M|$ be the cardinality of 
finite set $M$. For $M\subset \mathbb{Z}[i]$, 
$\alpha\in \mathbb{Z}[i]\setminus 0$, $r\in \mathbb{Z}[i]$
let us denote
$$
M(r \ppmod \alpha):=M\cap \left(r+\alpha \mathbb{Z}[i]\right).
$$

The main results are following:

\begin{teor}\label{1}
There are no $n$-universal sets of cardinality 
$n+1$ in $\mathbb{Z}[i]$ provided $n$ is large enough.
\end{teor}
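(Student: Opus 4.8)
The plan is to turn $n$-universality into a single multiplicative identity for the pairwise differences of $M$, and then to contradict that identity by comparing an \emph{arithmetic} quantity (a Gaussian ``superfactorial'') with a \emph{geometric} one (the minimal logarithmic energy of $n+1$ distinct lattice points). Write $D=\mathbb{Z}[i]$ and $N=n+1$. For a finite $M=\{a_0,\dots,a_n\}$ and each prime $\pi$ one forms, via a $\pi$-ordering, the factorial ideals $k!_M$; since $D$ is a principal ideal domain these are principal, with norm $\|k!_M\|=\prod_\pi\|\pi\|^{v_\pi(k!_M)}$, where $v_\pi$ is the $\pi$-adic valuation. The module of polynomials of degree $\le n$ taking integer values on $M$ has a regular basis $f_0,\dots,f_n$ with $\deg f_k=k$ and leading coefficient generating $(k!_M)^{-1}$; as $M\subset D$ gives one inclusion of these modules automatically, $n$-universality is exactly the reverse inclusion, hence the equalities $k!_M=k!_D$ for all $k\le n$. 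Using the level-by-level count $\sum_k v_\pi(k!_M)=\sum_{\{a,b\}\subset M}v_\pi(a-b)$ (both sides count pairs sharing a class modulo $\pi^j$, summed over $j$), the per-prime data assemble into the identity $\prod_{\{a,b\}\subset M}\|a-b\|=\prod_{k=0}^{n}\|k!_M\|$. Thus $n$-universality forces
$$\prod_{\{a,b\}\subset M}\|a-b\|=\prod_{k=0}^{n}\|k!_D\|=:P_n .$$

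Next I would estimate the right-hand side $P_n$, which is purely arithmetic. By the prime ideal theorem for $\mathbb{Q}(i)$ one has $\sum_{\|\pi\|\le x}\frac{\log\|\pi\|}{\|\pi\|}=\log x+O(1)$, and since $v_\pi(k!_D)=\sum_{j\ge1}\lfloor k/\|\pi\|^{j}\rfloor$ this yields $\log\|k!_D\|=k\log k+O(k)$. Summing, $\log P_n=\sum_{k\le n}\log\|k!_D\|=\tfrac12 N^2\log N+c_D N^2+o(N^2)$ for an explicit constant $c_D$ arising from a Mertens-type constant of $\mathbb{Q}(i)$ together with the contribution of higher prime powers.

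On the other hand, the differences $a-b$ come from $N$ \emph{distinct} Gaussian integers, so $\log\prod_{\{a,b\}}\|a-b\|=\sum_{\{a,b\}}\log\|a-b\|\ge E_{\min}(N)$, the minimum of the logarithmic energy over $N$-point subsets of the lattice. Here a potential-theoretic lower bound is needed: since a disk of radius $r$ contains at most $\pi r^2+O(r)$ lattice points, the chosen points cannot be more concentrated than a unit-density disk, which gives $E_{\min}(N)\ge\tfrac12 N^2\log N+c_V N^2+o(N^2)$, where $c_V$ is the logarithmic energy constant of the unit-density disk, corrected by the Epstein zeta function of $\mathbb{Z}[i]$.

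Comparing the two estimates, $n$-universality forces $\log P_n\ge E_{\min}(N)$; the contradiction is obtained by proving instead $\log P_n<E_{\min}(N)$ for all large $n$. Since the leading terms $\tfrac12 N^2\log N$ cancel, this is exactly the strict inequality $c_D<c_V$, and establishing it is the main obstacle: both constants live at the $N^2$ level, so one must evaluate the arithmetic constant $c_D$ and the geometric/lattice constant $c_V$ precisely and control the $o(N^2)$ errors effectively enough to make the gap decisive for large $n$. The mechanism is transparent from the rational case, where the same identity reads $\prod_{a<b}|a-b|=\prod_k k!$: there the minimizer of the left side is an interval, an interval is $n$-universal, and the two constants coincide. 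The content of the theorem is that the square lattice $\mathbb{Z}[i]$ cannot be packed tightly enough to realize the smaller Gaussian superfactorial $P_n$, i.e. the planar energy constant $c_V$ strictly exceeds the arithmetic constant $c_D$.
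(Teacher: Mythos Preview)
Your reduction is sound: $n$-universality of an $(n{+}1)$-set $M$ forces $k!_M=k!_{\mathbb{Z}[i]}$ for each $k\le n$, and Bhargava's identity then gives $\prod_{\{a,b\}\subset M}\|a-b\|=\prod_{k\le n}\|k!_{\mathbb{Z}[i]}\|=:P_n$. This is essentially the volume-minimality lemma the paper proves directly. But the argument stops short at the decisive point. You reduce everything to the strict inequality $c_D<c_V$ between two second-order constants, explicitly call this ``the main obstacle,'' and then do not resolve it. Both constants require real computation --- $c_D$ from a Gaussian Mertens constant, $c_V$ from the logarithmic energy of the disk plus a lattice (Epstein) correction --- and you give no argument that the comparison goes your way. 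Since in $\mathbb{Z}$ the analogous constants coincide, the margin for $\mathbb{Z}[i]$ could well be small, and an unproved assertion $c_D<c_V$ is exactly the content of the theorem, not a lemma one may take for granted. The lower bound $E_{\min}(N)\ge\tfrac12N^2\log N+c_VN^2+o(N^2)$ with the \emph{correct} second-order constant is likewise only sketched; passing from the continuous energy of a disk to a sharp lower bound for lattice subsets is itself nontrivial.

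The paper sidesteps these asymptotics entirely. After the same minimality lemma it symmetrizes $C$ (a combinatorial ``compression'' lemma that centers the points on each vertical and then each horizontal line without increasing the volume), placing $C$ inside an $n_1\times n_2$ rectangle. It then uses only two rational primes $p,q\equiv 3\pmod 4$ with $p$ slightly above $\sqrt n$ and $q$ slightly above $\sqrt{n/2}$: distinctness of residues modulo $p$ forces $n_1,n_2<p$, while each residue modulo $q$ occurring at most twice in a $(p{-}1)\times(p{-}1)$ square forces $|C|\le p^2-2(p-q)^2$. Choosing $p,q$ close enough to their targets yields $|C|<n+1$ via the explicit numerical inequality $3/(2\sqrt2)>1$. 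So the paper trades your delicate comparison of asymptotic constants for a short, effective argument with two well-chosen Gaussian primes; your route may be viable, but as written it defers precisely the step that carries all the weight.
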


\begin{teor}\label{2}
 There exists an $n$-universal
set of cardinality $O(n)$ in $\mathbb{Z}[i]$.
\end{teor}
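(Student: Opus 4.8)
The plan is to reduce the global assertion to a family of local (prime-by-prime) conditions, and then to produce one explicit set of size $O(n)$ meeting all of them at once. Recall that being integer-valued over $\mathbb{Z}[i]$ is a local property: a polynomial $f$ of degree $\leq n$ is integer-valued iff $v_p(f(z))\geq 0$ for every $z\in\mathbb{Z}[i]$ and every prime $p$. Fixing a prime $p$ with $q=\|p\|$, I would invoke Bhargava's $p$-orderings: a sequence $a_0,a_1,\dots$ with each $a_k$ minimizing $v_p\big(\prod_{j<k}(a_k-a_j)\big)$, whose minimal values $\nu_p(k):=v_p\big(\prod_{j<k}(a_k-a_j)\big)$ depend only on $\mathbb{Z}[i]$, and here equal $\sum_{i\geq 1}\lfloor k/q^i\rfloor$. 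The associated polynomials $B_k(x)=\prod_{j<k}(x-a_j)\big/\prod_{j<k}(a_k-a_j)$ are $p$-integer-valued on all of $\mathbb{Z}[i]$, and the matrix $\big(B_k(a_j)\big)_{0\leq j,k\leq n}$ is lower-triangular with unit diagonal and $p$-integral entries. Hence, if some length-$(n+1)$ $p$-ordering $\{a_0,\dots,a_n\}$ lies in $M$ and $f(M)\subset\mathbb{Z}[i]$, solving the triangular system shows the coefficients of $f$ in the basis $\{B_k\}$ are $p$-integral, so $f$ is $p$-integer-valued. Thus $M$ is $n$-universal provided that for \emph{every} prime $p$ it contains a length-$(n+1)$ $p$-ordering of $\mathbb{Z}[i]$; equivalently $\nu_p(M,k)=\nu_p(k)$ for all $k\leq n$.

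Next I would translate this per-prime condition into the counting function $M(r\ppmod\alpha)$. Since $\nu_p(k)=0$ for $k<q$, a prime contributes no constraint in degrees $\leq n$ unless $q\leq n$, so only the finitely many prime powers $\alpha=p^i$ with $\|\alpha\|\leq n$ are relevant, together with a distinctness requirement modulo primes of norm $>n$. I expect to prove the following sufficient criterion by running the greedy construction of a $p$-ordering \emph{inside} $M$ and always choosing a point in a least-occupied class: if for every $i\geq 1$ with $q^i\leq n$ one has $|M(r\ppmod{p^i})|\geq\lceil (n+1)/q^i\rceil$ for all $r$, then the greedy selection is never forced to overload a class beyond the ideal amount, so $\nu_p(M,k)=\nu_p(k)$ for all $k\leq n$. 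For a prime $p$ with $q>n$ it suffices that $M$ occupy at least $n+1$ distinct residues modulo $p$.

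For the construction I would take the box $M=\{a+bi:0\leq a,b<m\}$ with $m=\lceil\sqrt{Cn}\,\rceil$ for a large absolute constant $C$, so $|M|=Cn+O(\sqrt n)=O(n)$. The key structural fact is that $\alpha\mathbb{Z}[i]=p^i\mathbb{Z}[i]$ is a scaled-and-rotated copy of $\mathbb{Z}[i]$, hence a square lattice, \emph{well-rounded} with both successive minima equal to $|\alpha|=\sqrt{\|\alpha\|}$ — this holds for split, inert, and ramified primes alike. A standard lattice-point count then gives, uniformly in $r$,
\[
\big|M(r\ppmod\alpha)\big|=\frac{Cn}{\|\alpha\|}+O\!\left(\frac{\sqrt n}{\sqrt{\|\alpha\|}}\right).
\]
For $\|\alpha\|\leq n$ the main term exceeds the target $\lceil (n+1)/\|\alpha\|\rceil$ by $(C-1)n/\|\alpha\|$, while the error is $O(\sqrt{n/\|\alpha\|})$, which is dominated once $C$ is large (using $\|\alpha\|\leq n$); this verifies the hypothesis of the previous paragraph for every relevant $\alpha$. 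For primes of norm $q>n$ the same estimate shows $M$ meets $\Omega(n)$ distinct residues, hence at least $n+1$ after enlarging $C$, while for $q$ exceeding the squared diameter of the box no two points of $M$ coincide modulo $p$ at all.

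Two points carry the difficulty. First, the combinatorial criterion must be established at the level of the individual values $\nu_p(M,k)$, not merely their sum $\sum_k\nu_p(M,k)=\sum_{i\geq 1}\sum_r\binom{|M(r\ppmod{p^i})|}{2}$; I would argue this through the greedy $p$-ordering, checking that an adequate supply in every residue class keeps the greedy valuations equal to $\nu_p(\cdot)$ up to step $n$. Second, and conceptually the crux, is the \emph{uniformity over all primes simultaneously}: one finite set must serve infinitely many primes. This is tamed by the norm threshold (only $O(n/\log n)$ prime powers are genuinely constrained) together with the well-roundedness of $\alpha\mathbb{Z}[i]$, which makes the lattice-point estimate uniform and in particular prevents the split primes — where a fixed box might a priori be badly skewed — from behaving worse than the inert or ramified ones. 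The most delicate regime is $n<\|p\|=O(n)$, where one must still guarantee $n+1$ distinct residues; calibrating $C$ to dominate the error term there is the final step.
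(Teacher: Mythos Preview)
Your high-level reduction coincides with the paper's: a set $M$ is $n$-universal as soon as it contains, for every prime $p$, an $(n,p)$-universal subset of size $n+1$ (the paper states this immediately after defining $(n,p)$-universality). Where you diverge is in how that subset is produced. The paper takes $M$ to be the Gaussian integers in a disk of radius $R\sim\sqrt{n/c}$ and, for each $p$, \emph{explicitly} exhibits the subset as the lattice points of a scaled square $p^k\cdot\{a+bi:-A-\tfrac12\le a,b<A+\tfrac12\}$, choosing $k$ and $A$ so that every residue modulo $p^k$ occurs exactly $(2A+1)^2$ times while all points remain distinct modulo $p^{k+1}$. You instead try to \emph{extract} the subset from a box by running the greedy $p$-ordering, driven by a counting criterion on residue classes.

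That criterion, as stated, is false. Take $p=1+i$ (so $q=2$), $n=3$, and $M=\{0,1,2,3\}\subset\mathbb Z[i]$. Only $i=1$ satisfies $q^i\le n$, and each class modulo $1+i$ contains exactly $2=\lceil 4/2\rceil$ elements of $M$, so your hypothesis holds. But $2\equiv 0$ and $3\equiv 1$ modulo $(1+i)^2=2i$ as well, so every pair lying in the same class modulo $1+i$ is also congruent modulo $(1+i)^2$; consequently any ordering of $M$ has $\nu_p(M,2)\ge 2>1=\nu_p(2)$, and $M$ contains no length-$4$ $p$-ordering of $\mathbb Z[i]$. The gap is precisely at level $i_0+1$, the first power with $q^{i_0+1}>n$: for the greedy step to match $\nu_p(k)$ you additionally need that within each class modulo $p^{i_0}$ the points of $M$ occupy at least $\lceil(n+1)/q^{i_0}\rceil$ distinct subclasses modulo $p^{i_0+1}$, and your criterion says nothing about this. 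Your specific box very likely does satisfy the needed spread (small rescaled boxes have all points pairwise incongruent modulo $p$, large ones hit every subclass by the lattice count), but verifying it is a separate argument you have not supplied. The paper's explicit construction avoids the issue entirely by building in distinctness modulo $p^{k+1}$ from the start.
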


\section{Proof of the theorem \ref{1}}

Let
$C = \{c_1,c_2, \ldots, c_{n+1}\}$ 
be an $n$-universal set.

\begin{opr}
A \emph{volume} of the finite set of 
integers $X = \{x_1, x_2, \ldots, x_k\}$ is a product of 
all differences between elements of $X$:
$$V(X) = \prod_{1 \leq i < j \leq k} (x_i-x_j).
\footnote{Formally volume is defined only up to 
sign, but its choice is unimportant 
for the later considerations.}$$
\end{opr}

\begin{opr}
Non-negative integer numbers 
$a_1,a_2,\dots,a_N$ are called \emph{almost equal} if any 
two of them are either equal or differ by one. 
Finite set $M \subset \mathbb{Z}[i]$ is
\emph{almost uniformly distributed modulo} non-zero number 
$\alpha\in \mathbb{Z}[i]$  if 
numbers $|M(r \ppmod \alpha)|$ (where $r$ runs over 
all possible remainders modulo $\alpha$) are almost equal. 
\end{opr}

We are going to use the following standard facts:

\begin{prop}
1) There are exactly one way to split non-negative 
integer $N$ into sum of $k$ almost equal numbers.

2) This and only this partition minimizes 
sum of squares of summands.

3) If we split $N$ into $k$ almost equal parts, and 
each of them into $\ell$ almost equal parts, then we get 
the partition of $N$ into $kl$ almost equal parts.
\end{prop}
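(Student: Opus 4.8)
The plan is to treat the three parts in order, building each on the previous and leaning on division with remainder throughout. For part (1) I would first produce the partition explicitly: writing $N = qk + r$ with $0 \leq r < k$, I take $r$ parts equal to $q+1$ and $k-r$ parts equal to $q$; these are almost equal and sum to $rq+r+(k-r)q = kq+r = N$. For uniqueness I would observe that in any almost-equal partition every part lies in a two-element set $\{m, m+1\}$, where $m$ is the minimal part; counting the number $a$ of parts equal to $m+1$ gives $N = km + a$ with $0 \leq a < k$ (the degenerate description $a = k$ is rewritten by replacing $m$ with $m+1$), and division with remainder forces $m = q$, $a = r$. Thus the multiset of parts is uniquely determined.

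For part (2) the key is the standard smoothing argument. If a partition $x_1, \ldots, x_k$ of $N$ has two parts with $x_i \geq x_j + 2$, then replacing the pair $(x_i, x_j)$ by $(x_i - 1, x_j + 1)$ preserves the sum while changing the sum of squares by
$$(x_i-1)^2 + (x_j+1)^2 - x_i^2 - x_j^2 = 2(x_j - x_i + 1) \leq -2 < 0,$$
a strict decrease. Since there are only finitely many partitions of $N$ into $k$ nonnegative parts, a minimizer exists, and by the displayed inequality any minimizer must be almost equal; by part (1) that minimizer is unique, so the almost-equal partition is the unique minimizer of the sum of squares.

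For part (3) I would show that the $k\ell$ numbers produced by the two-stage splitting all lie in a common two-element set $\{s, s+1\}$, which by the characterization in part (1) is exactly what almost equal means. Writing $N = qk + r$ and $q = s\ell + t$ with $0 \leq r < k$ and $0 \leq t < \ell$, the $k$ first-stage parts equal $q$ or $q+1$. Splitting $q = s\ell + t$ into $\ell$ almost-equal parts gives values in $\{s, s+1\}$, and splitting $q+1$ likewise gives values in $\{s, s+1\}$ (when $t = \ell - 1$ we have $q+1 = (s+1)\ell$, so all $\ell$ parts equal $s+1$, still inside the set). Hence every resulting part is $s$ or $s+1$. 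Moreover $N = s(k\ell) + (tk + r)$ with $0 \leq tk + r \leq (\ell-1)k + (k-1) < k\ell$, so $s = \lfloor N/(k\ell)\rfloor$ and the two-stage partition coincides with the almost-equal partition of $N$ into $k\ell$ parts.

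The only subtlety — more a matter of getting the statement right than a genuine obstacle — is the bookkeeping of the degenerate boundary cases (when a remainder is $0$, or when a part divides evenly at the second stage), where the two-element set $\{m, m+1\}$ collapses to a singleton. One must therefore read the uniqueness in part (1) at the level of the multiset of parts rather than of the pair $(m, a)$. Everything else reduces to division with remainder and the one-line exchange inequality.
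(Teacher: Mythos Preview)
Your argument is correct in all three parts: the existence/uniqueness via division with remainder, the exchange inequality for part (2), and the two-stage bookkeeping for part (3) are the standard proofs. The paper itself does not prove this proposition at all---it simply states the three facts as ``standard'' and moves on---so there is no paper proof to compare against; what you have written is exactly the routine verification the authors chose to omit.
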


\begin{lem}
\label{vol} 
$C=\{c_1,c_2, \ldots, c_{n+1}\}$ is an $n$-universal set 
if and only if $C$ is almost uniformly distributed
modulo $p^k$ for each prime power $p^k$. In that 
case volume $V(C)$ is minimal (by absolute value) 
among all volumes of the sets 
consisting of $n+1$ integer points, and 
moreover $V(C)$ divides volume $V(B)$ for 
any other set $B$. If also $|V(B)| = |V(C)|$ then 
set $B$ is $n$-universal too.
\end{lem}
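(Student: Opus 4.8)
The plan is to reduce $n$-universality to a divisibility statement about volumes and then to translate that divisibility into the distribution condition through $p$-adic valuations. First I would use Lagrange interpolation: writing $C_j(x)=(C\setminus\{c_j\})\cup\{x\}$, every polynomial $f$ of degree at most $n$ satisfies $f(x)=\sum_{j}f(c_j)L_j(x)$ with $L_j(x)=\prod_{i\neq j}\frac{x-c_i}{c_j-c_i}$. Factoring out of both $V(C)$ and $V(C_j(x))$ the pairs not involving $c_j$ gives $L_j(x)=\pm V(C_j(x))/V(C)$. Since $f(C)\subset\mathbb{Z}[i]$ lets the values $f(c_j)$ range independently over $\mathbb{Z}[i]$, the set $C$ is $n$-universal if and only if each $L_j(x)\in\mathbb{Z}[i]$, i.e.\ if and only if $V(C)\mid V(C_j(x))$ for every $j$ and every $x\in\mathbb{Z}[i]$.

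Next I would compute, for each Gaussian prime $p$, the $p$-adic valuation $v_p(V(X))$. Using $v_p(x_i-x_{i'})=\sum_{k\geq 1}[\,x_i\equiv x_{i'}\ (\mathrm{mod}\ p^k)\,]$ and summing over pairs yields $v_p(V(X))=\sum_{k\geq 1}\sum_{r}\binom{n_{k,r}}{2}$, where $n_{k,r}=|X(r\ppmod{p^k})|$ and $r$ runs over the $\|p\|^k$ residues modulo $p^k$. For fixed $k$ the inner sum equals $\tfrac12(\sum_r n_{k,r}^2-(n+1))$, which by part 2 of the Proposition is minimal exactly when the counts $n_{k,r}$ are almost equal, that is, when $X$ is almost uniformly distributed modulo $p^k$. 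Part 3 of the Proposition guarantees that these per-level optima are mutually consistent (the almost-uniform distribution at a deep level lumps down to the almost-uniform distribution at every shallower level, and all but finitely many levels contribute $0$), so $v_p(V(X))$ attains its minimum over all $(n+1)$-point sets precisely when $X$ is almost uniformly distributed modulo $p^k$ for every $k$.

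It remains to match the divisibility condition with this minimality for all $p$ simultaneously. The easy direction is that if $C$ is almost uniform modulo every $p^k$, then $v_p(V(C))\leq v_p(V(B))$ for every prime $p$ and every $(n+1)$-point set $B$; taking $B=C_j(x)$ gives the divisibility above (hence $n$-universality), while the general $B$ shows $V(C)\mid V(B)$ and that $|V(C)|$ is minimal; and if $|V(B)|=|V(C)|$ then all valuations coincide, so $B$ is minimal too and therefore $n$-universal. The main obstacle is the converse: assuming $V(C)\mid V(C_j(x))$ for all $j,x$, I must show $C$ is almost uniform modulo every $p^k$. I would argue by contradiction through an explicit single-point move that strictly lowers $v_p(V)$. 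Choosing the smallest level $k$ at which almost-uniformity fails, part 3 forces some residue class $A$ modulo $p^{k-1}$ whose $\|p\|$ subclasses modulo $p^k$ have counts differing by at least two; moving one point $c_j$ from a heaviest such subclass to a point $x$ of a lightest one, with $x\equiv c_j\ (\mathrm{mod}\ p^{k-1})$ but chosen in a fresh residue modulo a high power $p^{k+m}$ avoiding all $c_i$, leaves every level below $k$ unchanged, strictly decreases the level-$k$ term, and does not increase the deeper levels. This produces $v_p(V(C_j(x)))<v_p(V(C))$, contradicting $V(C)\mid V(C_j(x))$ and completing the equivalence.
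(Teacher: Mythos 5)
Your overall route is essentially the paper's: Lagrange interpolation turns $n$-universality into the divisibility $V(C)\mid V(C_j(x))$, the identity $v_p(V(X))=\sum_{k\geq 1}\sum_r \binom{n_{k,r}}{2}$ reduces everything to per-level counts, and the converse is attacked at the minimal bad level by a one-point replacement. The easy direction and the derivation of the three volume claims from the equivalence are sound. The genuine gap is in the replacement step, where you claim your choice of $x$ ``does not increase the deeper levels.'' Freshness of $x$ modulo one high power $p^{k+m}$ controls only the levels $l\geq k+m$; at an intermediate level $k<l<k+m$ the class of $x$ modulo $p^l$ may contain many points of $C$, and the level-$l$ term then grows by that count, possibly by more than the total gain of $1$ achieved at level $k$. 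Concretely, take $\|p\|=2$, $k=1$, and let $C$ consist of a heavy class of $6$ points modulo $p$, split $3+3$ modulo $p^2$ and separated modulo $p^3$, together with a light class of $4$ points modulo $p$ lying in a single class modulo $p^2$ and split $2+2$ modulo $p^3$. Your prescription allows $c_j$ in the heavy class and $x$ in one of the $2$-point classes modulo $p^3$ (fresh modulo $p^{11}$, say); the changes of the level terms at $l=1,2,3$ are then $-1,+2,+2$, and all deeper levels change by nonnegative amounts, so $v_p(V(C_j(x)))>v_p(V(C))$ and no contradiction is reached. What is missing is exactly the paper's ``without loss of generality'' step: the deleted point and the new residue must be chosen recursively, $c_j$ in a heaviest subclass at level $k$ \emph{and at every deeper level}, $x$ in a lightest subclass at level $k$ \emph{and at every deeper level}, which forces $|C(c_j \ppmod p^{l})| > |C(x \ppmod p^{l})|$ for all $l>k$ (a ceiling/floor argument shows the strict gap propagates downward). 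With that choice every level $l>k$ contributes $|C(x \ppmod p^{l})|-\bigl(|C(c_j \ppmod p^{l})|-1\bigr)\leq 0$, and the strict decrease at level $k$ wins.

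A smaller slip: you quote part 3 of the Proposition backwards. Almost uniform distribution at a deep level does not ``lump down'' to shallower levels --- counts $1,1,0,0$ modulo $p^2$ collapse to $2,0$ modulo $p$ --- and part 3 asserts the refining direction, from modulus $p^{k-1}$ to modulus $p^k$. This matters for your sentence that $v_p(V(X))$ is minimal \emph{precisely} when $X$ is almost uniform at every level: the ``only if'' half needs the existence of a set realizing all per-level minima simultaneously, which follows from part 3 used in the refining direction (or, inside the lemma, is witnessed by $C$ itself once the equivalence is proved). Your final paragraph then goes through if argued level by level: $|V(B)|=|V(C)|$ together with $V(C)\mid V(B)$ forces $v_p(V(B))=v_p(V(C))$ for every $p$; each level term of $B$ is at least the per-level minimum attained by $C$, hence all terms agree; and the uniqueness statement in part 2 gives almost uniformity of $B$ at every level, hence $n$-universality by the easy direction.
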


\begin{proof}
For arbitrary $c_m \in C$ let us consider 
the polynomial $Q_m(x)$ of degree $n$, which 
takes value $0$ in all points $c_i$ (where $i \ne m$) and 
value $1$ in the point $c_m$. Let $f$ be a polynomial 
of degree not greater than $n$. By Lagrange 
interpolation formula we have that 
$$f(x)=\sum f(c_k) Q_k(x).$$ So $C$ is an $n$-universal 
set if and only if all polynomials $Q_m$ are integer-valued. Also
\begin{equation}
\label{lej}
Q_m(x) = \prod_{i\ne m} \frac{(x-c_i)}{(c_m-c_i)}.
\end{equation} 
Polynomial $Q_m$ is an integer-valued if 
and only if for any prime $p$ maximal 
degree of $p$ that divides numerator 
(where $x$ is a fixed integer) is not 
less than maximal degree of $p$ dividing 
denominator. Now let us fix $p$.

Assume that elements of $C$ is almost uniformly distributed
modulo every power of $p$. Then for any power $p^k$ 
there are at least as many multiplies that divides 
$p^k$ in the numerator as in the denominator. Indeed, there are exactly 
$r-1$ such multiplies in the denominator 
(where $r=|C(c_m \ppmod p^k)$). And numerator 
has exactly $|C(x \ppmod p^k)|$ such multiplies, which is not 
less than $r$ under our assumption. After 
summing by $k=1,2,\dots$ one gets the above-mentioned 
condition on the maximal degree of $p$ in the numerator and denominator.

Now assume that elements of $C$ is not almost uniformly
distributed $p^k$ for some $p^k$, and 
let $k$ be minimal among such numbers. Then for 
some $r$ the set $C(r \ppmod p^{k-1})$ is not
almost uniformly distributed modulo $p^k$  
(the contrary would contradict to the part 
3 of Proposition 1). Let 
$|C(c_m \ppmod p^k)|\geq |C(r \ppmod p^k)|+2$ 
and  $c_m\equiv r$ $\ppmod p^{k-1}$. Without loss of 
generality $|C(c_m \ppmod p^{k+1})|>|C(r \ppmod p^{k+1})|$ 
(otherwise we may replace $c_m$ with 
some $c_{m'}$ and $r$ with some $r'$ with the same 
remainders modulo $p^k$ and suitable remainders 
modulo $p^{k+1}$). Similarly we may think that 
$$
|C(c_m \ppmod p^{k+\rho})|>|C(r \ppmod p^{k+\rho})|, \rho=1,2,\dots
$$
Consider the formula of $Q_m(r)$. If $l=1,2,\dots,k-1$ then 
there are equal number of multiplies divisible by $p^l$ in 
the numerator and denominator. If $l=k$ then denominator 
has more such multiplies than numerator, 
and if $l > k$ then not less than numerator. Summing 
by $l$ we get that power of $p$ in the 
numerator is less than in the denominator, so $Q_m$ is not 
integer-valued.

First part of the lemma is now proved. Note that 
if $C$ is almost uniformly distributed modulo $q=p^k$  
then number of pairs in $C$ with equal remainders 
modulo $q$ is minimal among all $n+1$-element sets. Last 
follows immediately from the part 2 of Proposition 1. 
Summing by $k=1,2,\dots$ we get that power 
of $p$ in the $V(C)$ is minimal if and only 
if $C$ is almost uniformly
distributed modulo $p^k$ for any $k$. 
This proves the second and third parts of the lemma. 
\end{proof}

Note that for any particular $p$ it is 
not hard to construct 
the set, which is almost uniformly distributed 
modulo $p^k$ (for any $k$). However ``to combine'' all 
these sets into one (which fits all $p$) is not always possible.

For example, for $n=1,2,3,5$ there are universal sets $\{0,1\}$, $\{0,1,i\}$, $\{0,1,i,1+i\}$, $\{0,1,2,i,1+i,2+i\}$.
But, as one may easily check, there is no $4$-universal set with $5$ elements. 


Now let $C=\{c_1,c_2,\dots,c_{n+1}\}$ be an $n$-universal set.

Our next aim is to
replace set $C$ with more convenient set without 
increasing absolute value of its volume. 
We are going to symmetrize set $C$ with respect 
to vertical and horizontal lines of integer 
lattice. We need the following combinatorial lemma for that.

\begin{lem}
\label{comb}
Let $A$ and $B$ be some finite
sets of integer 
rational points of prescribed cardinalities.
Fix number $k \in \mathbb{N}_0$. 
Then number of pairs $a \in A$, $b \in B$ such that 
$|a-b| \leqslant k$ is minimal when each of sets 
$A$ and $B$ is a set of consecutive integer 
points and midpoints of segments formed by $A$ 
and $B$ are equal (in case of 
$|A| \equiv |B| \; (\mathrm{mod }\; 2)$) or differ 
by $\frac{1}{2}$ (otherwise).
\end{lem}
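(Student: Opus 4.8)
The plan is to rewrite the cross-count as a windowed autocorrelation and then to invoke a discrete rearrangement inequality. For integer sets $A,B\subset\mathbb Z$ put $r(t)=|A\cap(t+B)|=\#\{(a,b)\in A\times B:\ a-b=t\}$; then the quantity in question is $N_k(A,B)=\sum_{t=-k}^{k} r(t)$. Since $\sum_t r(t)=|A|\,|B|$ is fixed, extremising $N_k$ is the same as extremising the complementary count $\#\{(a,b):|a-b|>k\}$, and it is exactly this complementary (distant-pair) count that enters the volume $|V(C)|=\prod_{i<j}|c_i-c_j|$: writing $\log|a-b|$ as a nonnegative combination of the indicators $\mathbf 1[|a-b|>k]$ shows that a configuration extremal for every threshold $k$ at once is precisely the one minimising the volume under horizontal (and, symmetrically, vertical) symmetrisation. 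I will therefore prove that the centred consecutive pair described in the lemma is the extremiser of $N_k$ for each fixed $k$, which is the substance of the statement.

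Second, I would establish this extremal property by a compression (bulldozer) argument, the discrete analogue of the Riesz rearrangement inequality with symmetric decreasing kernel $\mathbf 1_{[-k,k]}$. Note that $N_k(A,B)=\sum_{a\in A}w_B(a)$ with $w_B(x)=|B\cap[x-k,x+k]|$. When $B$ is an interval, $w_B$ is symmetric and unimodal (a trapezoid about the midpoint of $B$), so the $p=|A|$ positions carrying the largest weights form an interval centred at that midpoint, with an integer centre when $p$ is odd and a half-integer centre when $p$ is even; hence the optimal $A$ is a centred interval. By the symmetric roles of $A$ and $B$ the same holds with the roles reversed, so a centred pair of intervals is a mutual best response. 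The centring then matches the lemma: if $|A|\equiv|B|\pmod 2$ the two midpoints are of the same type and can be made equal, while if $|A|\not\equiv|B|\pmod 2$ one is an integer and the other a half-integer, forcing the ``differ by $\tfrac12$'' alternative.

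The main obstacle is to make this compression genuinely monotone and to pin down uniqueness together with the parity alternative, rather than merely exhibiting a mutual best response. The difficulty is that while $A$ is being compressed $B$ need not be an interval, so the profile $t\mapsto|A\cap(t+B)|$ need not be unimodal and a single local move is not obviously improving. I expect to resolve this by attaching a strictly decreasing potential—such as the total dispersion $\sum_{a\in A}(2a-\mu_A)^2+\sum_{b\in B}(2b-\mu_B)^2$ about a common target midpoint, or the lexicographic order on the sorted gap multisets—that strictly drops under every nontrivial compression while $N_k$ never decreases, so the process terminates only at a gapless, centred pair; strictness of the drop off the extremal configuration yields uniqueness up to the stated parity offset. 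The parity case is then settled by a direct two-line comparison of the two candidate integer placements of one interval against the other, using the exact symmetry of the trapezoid $r$. Completing these steps proves the lemma and licenses replacing $C$ by its symmetrised counterpart without increasing $|V(C)|$, as required for Theorem \ref{1}.
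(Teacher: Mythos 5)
Your reduction of the lemma to maximising $N_k(A,B)=\sum_{t=-k}^{k}|A\cap(t+B)|$ for every $k$ (equivalently, stochastically minimising the multiset of differences, which is what controls the volume) is the right framing, and your best-response computation is correct: when $B$ is an interval the weight $w_B(x)=|B\cap[x-k,x+k]|$ is symmetric and unimodal, so the optimal $A$ against it is a centred interval, and symmetrically. (Note that you are proving \emph{maximality} of the count of close pairs; the word ``minimal'' in the statement has to be read accordingly, as the paper's own proof, which moves configurations ``without decreasing the number of mentioned pairs'', makes clear.) But a mutual best response is only a fixed point of the best-response map; the lemma requires a comparison against \emph{arbitrary} pairs $(A,B)$, and you yourself flag this as the missing step. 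Your third paragraph does not close it: a potential (dispersion about a target midpoint, or lexicographic order on gap multisets) can certify \emph{termination} of a compression process, but the substantive claim --- that there exists a local move, applicable whenever the configuration is not the extremal one, under which $N_k$ does not decrease --- is exactly the hard part, and you define no such move and prove no such monotonicity. As you correctly observe, compressing $B$ alone can decrease $N_k$ when $A$ is placed adversarially, and no choice of potential rescues a move set that has not been shown improving.

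This is precisely where the paper's argument does its real work, via an idea absent from your sketch: the elementary move acts on $A$ and $B$ \emph{simultaneously}. Replacing each $b\in B$ by a segment of length $2k$ centred at $b$ (so that $|a-b|\leq k$ iff $a\in b'$), the paper shifts a maximal block of leftmost segments one step to the right; the sets $Z_-$ and $Z_+$ of points losing, respectively gaining, one covering segment satisfy $|Z_-|=|Z_+|=\min(s,2k+1)=z$. If $m=m_--m_+>0$, where $m_\pm=|A\cap Z_\pm|$, the shift is accompanied by exchanging the $m$ leftmost points of $A\cap Z_-$ for the $m$ leftmost points of $Z_+\setminus A$, and a short coverage count (the discarded points were covered by at most $t=(z-m_-+1)+(z-m_-+2)+\cdots+(z-m_+)$ segments, the adopted ones by at least $t$) shows the joint move never decreases the pair count, while the sum of pairwise distances between segment midpoints strictly decreases, forcing termination at consecutive segments; only at that point is your best-response step invoked to place $A$. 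Without this compensating exchange on $A$, or an equivalent device, your compression scheme stalls exactly at the configurations you identified as problematic, so the proposal as written has a genuine gap at the core of the lemma.
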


\begin{proof} 
Replace each point $b \in B$ by the segment $b'$ with 
length $2k$ and midpoint in $b$. Then 
$|a-b| \leq k$ exactly when $a \in b'$.

Now we are going to modify set of segments $B'$ 
without decreasing the number of mentioned pairs, 
until segments becomes consecutive. Let 
the leftmost segment start in the point $0$, let there 
also be some segments that start in the 
points $1, \ldots, s-1$, but no segment starts 
in the point $s$. Move each of mentioned 
segments one to the right. Let $Z_-$ be a 
set of integer points which now are 
cowered by one less segments 
from $B'$, $Z_+$ --- by one more 
segments from $B'$. All other points are
covered by 
the same number of segments as before.

Then $|Z_-| = |Z_+| = \min (s, 2k+1) = z$. 
Denote $A_i = A\cap Z_i$, $|A_i| = m_i$, where 
$i\in\{-, +\}$. If $m_- \leq m_+$ then 
above-mentioned shift does not decrease 
the number of interesting pairs. Otherwise, 
denote $m = m_- - m_+ > 0$. Consider $m$ leftmost 
points in $A_-$. They were covered (totally) by 
at most $t = (z-m_-+1) + (z-m_-+2) + \ldots (z-m_+)$ segments.
Consider $m$ leftmost points in $Z_+ \setminus A_+$. 
They are now covered (totally) by at least $t$ segments. 
So, if in addition to shift we replace 
these $m$ points in $A$ with the 
mentioned $m$ points in $Z_+ \setminus A_+$, then 
number of interesting pairs does not decrease.

If there was at least one segment 
in $B'$ apart from shifted ones, then sum of 
pairwise distances between midpoints of all segments 
in $B'$ decreased. So in the end of our process the segments 
in $B'$ must be consecutive. Obviously, after that, 
points of $A$ should be placed in the way described in the statement.
\end{proof}

For each vertical line $\{x=c\}$ replace
the points of $C$ on this line to
the segment centered either in $(c,0)$ 
or $(c,1/2)$ (depending on parity of 
the number of points).

Using Lemma $\ref{comb}$ to all numbers 
$k \in \mathbb{N}_0$ and all pairs of vertical lines 
 we see that this operation does not increase the 
absolute value of the volume. After that one 
may act in the same way with the horizontal lines.

Hereupon set $C$ becomes rather ``symmetric''. In 
particular, if $n_1$ is the length of the segment in 
the intersection $C$ and real axis, $n_2$ --- same for 
the imaginary axis, then set $C$ is placed 
inside a rectangle $n_1 \times n_2$.

We need the following well-known fact for the later reasoning:

\begin{ter}
Quotient of two consecutive rational prime numbers of 
the form $4k+3$ tends to $1$.
\end{ter}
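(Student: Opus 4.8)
The statement is a standard consequence of the prime number theorem for arithmetic progressions, so the plan is to reduce it to that asymptotic and then run the usual ``no large gaps'' argument. First I would fix notation: write the primes of the form $4k+3$ in increasing order as $q_1 < q_2 < \cdots$, and set $\pi(x) := \#\{\,p \leq x : p \text{ prime},\ p \equiv 3 \ppmod 4\,\}$. Since the residues coprime to $4$ are exactly $1$ and $3$, the prime number theorem in progressions gives the asymptotic $\pi(x) \sim \frac{1}{\varphi(4)}\cdot\frac{x}{\ln x} = \frac{x}{2\ln x}$ as $x \to \infty$.

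The key computation is to count primes of our form in the short interval $(x,(1+\delta)x]$ for a fixed $\delta > 0$. Using the asymptotic above together with $\ln((1+\delta)x) = \ln x + O(1)$, one obtains
$$\pi\big((1+\delta)x\big) - \pi(x) \sim \frac{(1+\delta)x}{2\ln x} - \frac{x}{2\ln x} = \frac{\delta x}{2\ln x} \longrightarrow \infty.$$
In particular there is a threshold $x_0 = x_0(\delta)$ such that for all $x \geq x_0$ the interval $(x,(1+\delta)x]$ contains at least one prime $\equiv 3 \ppmod 4$. Feeding $x = q_m$ into this: for every $m$ with $q_m \geq x_0$ there is a prime of our form in $(q_m,(1+\delta)q_m]$, and since $q_{m+1}$ is the least prime of this form exceeding $q_m$, we get $q_{m+1} \leq (1+\delta)q_m$. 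Hence $\limsup_m q_{m+1}/q_m \leq 1+\delta$, and as $q_{m+1} > q_m$ always holds while $\delta > 0$ is arbitrary, we conclude $q_{m+1}/q_m \to 1$.

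The only genuinely hard input is the prime number theorem in arithmetic progressions; everything after it is bookkeeping. I would emphasize that Chebyshev-type bounds $c_1 x/\ln x \leq \pi(x) \leq c_2 x/\ln x$ are \emph{not} sufficient here: with only such bounds the difference $\pi((1+\delta)x)-\pi(x)$ is guaranteed positive merely once $\delta$ exceeds $c_2/c_1 - 1$, which bounds the $\limsup$ of the ratio but does not force convergence to $1$. Pinning down the true leading constant $\tfrac12$ --- equivalently, the exact asymptotic density of the progression --- is precisely what makes the analytic theorem, rather than an elementary sieve estimate, indispensable.
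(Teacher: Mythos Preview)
Your argument is correct: invoking the prime number theorem for the progression $4k+3$ and then running the short-interval count is the standard route, and you carry it out cleanly. The remark on why Chebyshev-type inequalities alone do not suffice is also accurate.

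There is nothing to compare against, however: the paper does not prove this theorem. It is introduced only as a ``well-known fact'' needed for the subsequent estimate on primes $p\in(\sqrt{n},\eps\sqrt{n})$ and $q\in(\sqrt{n/2},\eps\sqrt{n/2})$, and no proof or sketch is given. Your proposal therefore supplies exactly what the paper omits, and does so in the expected way.
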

  
For particular $\eps > 1$ there always exists 
prime numbers of the form $4k+3$: $p \in (\sqrt{n},\eps\sqrt{n})$ and 
$q \in \left(\sqrt{\dfrac{n}{2}}, \eps\sqrt{\dfrac{n}{2}}\right)$ 
when $n$ is large enough. These numbers are also primes in
$\mathbb{Z}[i]$. But $\|p\| > n$, so elements 
of $C$ cannot give equal remainders by 
modulo $p$. In particular $n_1 < p$ and $n_2 < p$. So 
set $C$ is placed inside some 
square $(p-1)\times(p-1)$. Moreover 
$2\|q\| > n$, so elements of $C$ may give each 
remainder by modulo $q$ at most twice. It is easy to 
see that in this case there are no more 
than $p^2 - 2(p-q)^2$ points in $C$. So
\begin{align*} 
n &< p^2 - 2(p-q)^2 \\
n &< -p^2 + 4pq -2q^2 \\
p^2 + n + 2q^2 &< 4pq \\
3n &< 2\sqrt{2}\eps n \\
\frac{3}{2\sqrt{2}} &< \eps. 
\end{align*}

Taking $\eps$ small enough one gets a contradiction. 
Theorem \ref{1} is now proved.

\section{Examples of universal sets}

Let us prove Theorem \ref{2}.

Let $p$ be a prime number. Set $C$ of cardinality 
$n+1$ is called \emph{($n, p$)-universal} if 
it is almost uniformly
distributed modulo $p^k$ (for any $k$). 
Proof of Lemma \ref{vol} shows that if 
polynomial $f$ of degree at most $n$ takes integer 
values in all points of some $(n,p)$-universal set, 
then its value in any integer point is a fraction 
without $p$ in the denominator. Hence set 
that contains $(n,p)$-universal subset 
for any $p$ is always $n$-universal.

Now we are going to prove that integer points 
inside the circle with center in $0$ and radius 
$R$ contains at least $cR^2$ points 
that form $(n, p)$-universal set (here $c$ is some 
absolute constant). From this we conclude that integer points 
inside the circle $\sqrt{\frac{n+1}c}$ forms an 
$n$-universal set as required.

Let $|p^k|\leq R\sqrt{2}<|p|^{k+1}$.

Consider all integers that lies in half-open square 
$$
\{p^k(a+bi), -1/2-A\leq a,b< 1/2+A\}
$$
for some non-negative rational integer $A$.

Clearly each remainder modulo $p^k$ 
occurs exactly  $(2A+1)^2$ times among considered points.
Choose $A$ in such a way that

(i) considered points lies inside a circle 
with the center $0$ and radius $R$;

(ii) no two of considered points have 
equal remainders by modulo $p^{k+1}$.

Property (i) holds if $(1/2+A)|p^k|\leq R$, property 
(ii) definitely holds if diagonal of the 
square is shorter than $p^{k+1}$, i. e. 
if $(2A+1)\sqrt{2}|p^k|\leq |p^{k+1}|$, 
or $2A+1\leq |p|/\sqrt{2}$. Note that $A = 0$ satisfies 
both conditions. Also note that two consecutive 
half-integer rational numbers differs in no more than 
three times. Choose maximal $A$ that satisfies 
both inequalities. We have that 
$$
A+1/2\geq \frac13\min(R/|p^k|,\frac{|p|}{2\sqrt{2}})
\geq \frac{R}{6|p^k|}.
$$
Number of selected points is $(2A+1)|p^k|^2\geq R^2/9$ as required.

Theorem \ref{2} is proved.

\vskip 0.2cm
The following conjecture generalizes both our theorems.
\begin{conj} The minimal cardinality of the $n$-universal set
in $\mathbb{Z}[i]$ grows as $\frac{\pi}{2}n+o(n)$ and 
asymptotically sharp example is realized on the set of integer
points inside the circle of radius $\sqrt{n/2}+o(\sqrt{n})$. 
\end{conj}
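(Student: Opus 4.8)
The plan is to prove the two matching bounds separately, in each case reducing to the arithmetic criterion of Lemma \ref{vol}: a candidate set is relevant only through its distribution among residue classes modulo prime powers $p^k$. The constant $\pi/2$ should emerge from a single family of binding constraints, namely the inert primes $p\equiv 3\ppmod 4$ (those of norm $p^2$) with $p$ close to $\sqrt{2n}$. For such a $p$ the fundamental domain of $p\mathbb{Z}[i]$ is a square of area $p^2\approx 2n$; a lattice disk of radius $\sqrt{n/2}$ has diameter $\sqrt{2n}\approx p$, so it is the largest disk whose points still have pairwise distinct residues modulo $p$, and it occupies the fraction $\pi/4$ of that square. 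This is exactly the heuristic behind the predicted count $\frac\pi2 n=\frac\pi4\cdot 2n$ and radius $\sqrt{n/2}$.

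For the upper bound I would show that the set $D$ of lattice points inside a disk of radius $R=\sqrt{n/2}\,(1+o(1))$ is $n$-universal. Since $|D|>n+1$, I cannot apply Lemma \ref{vol} directly; instead I would use the sufficient condition noted after Theorem \ref{2}, that a set is $n$-universal as soon as it contains an $(n,p)$-universal subset for every prime $p$. For each $p$ the construction in the proof of Theorem \ref{2} produces such a subset as a union of translates of a square block of side about $|p^k|$; one checks that for $R=\sqrt{n/2}$ these blocks assemble into a square of side $\approx\sqrt n$, which is inscribed in $D$ because its diagonal is the diameter $\sqrt{2n}$. The task is then to verify that a \emph{single} disk of radius $\sqrt{n/2}\,(1+o(1))$ simultaneously contains such an inscribed $(n,p)$-universal subset for every $p$; the small primes are harmless because $D$ is already almost uniformly distributed modulo small moduli, and the delicate case is the inert prime near $\sqrt{2n}$, where one must place $n+1$ points of $D$ with distinct residues modulo $p$ and almost uniform residues modulo every higher power $p^j$.

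For the lower bound I would argue that any $n$-universal set $C$ must, after the vertical and horizontal symmetrisation already used in the proof of Theorem \ref{1}, occupy at least the fraction $\pi/4-o(1)$ of a fundamental square of the inert prime $p\approx\sqrt{2n}$. Universality forces $C$ to be almost uniformly distributed modulo $p$ and modulo all smaller inert primes $q$; among all symmetric occupancy patterns inside the $p\times p$ square that meet these simultaneous constraints, the disk should be optimal by an isoperimetric comparison, so that $|C|$ cannot fall below $\frac\pi4\,p^2-o(n)=\frac\pi2 n-o(n)$. Making this comparison rigorous is where the interpolation criterion must be converted into a genuine extremal statement about lattice-point distributions.

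The step I expect to be the main obstacle is precisely this conversion. Lemma \ref{vol} characterises $n$-universality cleanly only for sets of cardinality exactly $n+1$, and Theorem \ref{1} shows those do not exist in the range of interest; so for minimal sets of size $\approx\frac\pi2 n$ one needs an intrinsic description of universality for larger cardinalities, presumably through simultaneous $p$-orderings of length $n+1$ in the sense of Bhargava and Wood. On top of this, both bounds require controlling the residue counts of a lattice disk modulo every prime power with error $\pm 1$ — a Gauss-circle-type equidistribution problem with unusually tight uniformity — and then proving that the disk is the isoperimetric optimum under all these constraints at once. I would attack the equidistribution via exponential sums over the boundary lattice points and the extremal shape via a continuous measure-theoretic relaxation, but reconciling the $\pm 1$ combinatorial precision with the continuous optimum is the crux of the conjecture.
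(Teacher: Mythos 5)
You are attempting the paper's closing Conjecture, which the authors explicitly leave open: the paper contains no proof of it, only the one-line remark after the statement (a disk of radius $R$ misses residue classes modulo inert primes of absolute value slightly greater than $R\sqrt2$), which is the same $\pi/4$-of-a-fundamental-square heuristic you begin with. Your text is likewise a program rather than a proof --- you concede as much in your final paragraph --- but beyond the unexecuted steps it contains one genuinely false claim, in the lower bound. You assert that universality forces $C$ to be almost uniformly distributed modulo $p$ and modulo all smaller inert primes. For $|C|>n+1$ no whole-set equidistribution property can be necessary, because any superset of an $n$-universal set is again $n$-universal: adjoin to a universal set a large cloud of points lying in a single residue class modulo $p$ and you destroy almost uniform distribution while preserving universality. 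Restricting attention to \emph{minimal} universal sets does not rescue the claim, since no necessity theorem is available for cardinalities exceeding $n+1$ --- this is exactly the ``intrinsic description'' whose absence you yourself point out, and then ignore. The necessary condition that the interpolation method actually yields for large sets is much weaker: for every prime $p$ with $\|p\|\leq n+1$, the set $C$ must meet every residue class modulo $p$, for otherwise $\frac1p\prod_j(x-s_j)$, the product taken over representatives $s_j$ of all classes except a missed one, has degree $\|p\|-1\leq n$, is integral on $C$, and is not integer-valued. For the largest inert prime $p\leq\sqrt{n+1}$ this gives only $|C|\geq\|p\|=n-o(n)$, not $\frac\pi2 n-o(n)$; manufacturing the extra factor $\pi/2$ from the interaction of constraints at different primes is precisely the open content of the conjecture, and your ``isoperimetric comparison'' is a restatement of that content, not an argument for it.

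The upper bound has a fixable error and a real gap. The blocks in the proof of Theorem \ref{2} are unions of fundamental domains of $p^k\mathbb{Z}[i]$; for the critical inert primes $p\in(\sqrt n,\sqrt{2n})$ the smallest such block has side $p>\sqrt n$ and half-diagonal $p/\sqrt2>\sqrt{n/2}$, so it cannot be placed inside your disk $D$, and no assembly of such blocks is inscribed in $D$. What saves this case is a different, more elementary observation: since an inert $p$ is real, any axis-parallel square with both sides shorter than $p$ meets each residue class modulo $p$ at most once (a nonzero Gaussian multiple of $p$ has real or imaginary part of absolute value at least $p$); the axis-parallel square inscribed in $D$ has sides of length $\sqrt2\,R=\sqrt n\,(1+o(1))$, shorter than $p$ for the critical primes, so its roughly $n$ lattice points have pairwise distinct residues modulo $p$ and hence, with the $o(1)$ slack in the radius, furnish an $(n,p)$-universal subset with no reference to fundamental domains. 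The remaining, unaddressed work is the small-modulus half: for every prime power of norm at most $n$ one must select $n+1$ points of $D$ that are almost uniformly distributed modulo all powers of that prime simultaneously, where $D$ offers only a factor $\pi/2$ of room; this is the Gauss-circle-type selection problem you name but do not solve. As it stands, your proposal reproduces the heuristic the paper already records and correctly locates the difficulties, but it proves neither inequality; the conjecture remains open.
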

Note that the points inside less circle do not form $n$-universal
set for sure (because the points inside circle of radius $R$ give
not all possible
residues modulo some prime of absolute value slightly
greater then $R\sqrt{2}$). 

It also looks probable that the theorems, analagous to proven
here for $\mathbb{Z}[i]$, hold for wide class of integral
domains.

\textbf{Acknowledgement.} We are grateful 
to prof. Paul-Jean Cahen
for useful remarks and conslutations.

\end{document}